\documentclass[12pt,reqno]{amsart}
\usepackage{amsmath,amsthm,amssymb,amsfonts,amscd,mathrsfs,bbding,graphicx,latexsym}
\theoremstyle{plain}
\newtheorem{theorem}{Theorem}[section]
\newtheorem{corollary}{Corollary}[section]

\newtheorem{lemma}{Lemma}[section]

\theoremstyle{definition}

\def \Z {{\mathbb Z}}

\title{Linear Algebraic Method and the Erd\H{o}s-Heilbronn Conjecture}
\author{Guanzhong Yang}
\address{Department of Mathematics\\ Imperial College London, UK}
\email{gy625@ic.ac.uk}
\subjclass[2020]{Primary 11B75; Secondary 11P70}
\keywords{sumset, Erd\H{o}s--Heilbronn conjecture, linear algebra}

\begin{document}

\begin{abstract}
Additive combinatorics asks for lower bounds on sumsets and restricted sumsets over finite fields. Central examples are the Cauchy--Davenport theorem and the Erd\H{o}s--Heilbronn conjecture. In this note, we develop Das's linear algebraic method and give a new elementary proof of the Alon--Nathanson--Ruzsa theorem for restricted sumsets, which implies the Erd\H{o}s--Heilbronn conjecture. Compared with the classical polynomial method via Combinatorial Nullstellensatz, our proof uses only basic linear algebra over finite fields, including Vandermonde matrices and solvability of linear systems.
\end{abstract}

\maketitle

\section{Introduction}

\medskip

Additive combinatorics is a central branch of combinatorial number theory, focusing on the additive structure of subsets of integers and finite fields \cite{2006}. A foundational problem in this field is to give sharp lower bounds for the cardinality of sumsets, which has wide applications in number theory, combinatorics, and theoretical computer science.

Let $p$ be a prime number, and let $\Z/p\Z$ denote the finite field of integers modulo $p$. For nonempty subsets $A, B$ of $\Z/p\Z$, the classical sumset is defined as
$$A+B=\{a+b:\ a\in A, b\in B\}.$$
The celebrated Cauchy-Davenport theorem, a cornerstone of additive combinatorics, gives a tight lower bound for the size of such sumsets:
$$|A+B|\geqslant \min\{p,\ |A|+|B|-1\}.$$

In 1964, Erd\H{o}s and Heilbronn proposed a famous conjecture for the so-called restricted sumset, where the summands are required to be distinct \cite{1964}. For nonempty subsets $A, B$ of $\Z/p\Z$, we define the restricted sumset as
$$A\dot{+}B=\{a+b:\ a\in A, b\in B, a\not=b\}.$$
The Erd\H{o}s-Heilbronn conjecture states that for any nonempty subset $A$ of $\Z/p\Z$,
$$|A\dot{+}A|\geqslant \min\{p,\ 2|A|-3\}.$$

This conjecture remained open for 30 years, until it was first proven by Dias da Silva and Hamidoune in 1994, using tools from exterior algebra and representation theory \cite{1994}. In 1995, Alon, Nathanson and Ruzsa developed the polynomial method, specifically the Combinatorial Nullstellensatz \cite{1995,1999}, to give a simpler proof of a more general result: for nonempty subsets $A, B$ of $\Z/p\Z$ with $|A|\not=|B|$,
$$|A\dot{+}B|\geqslant \min\{p,\ |A|+|B|-2\}.$$
This general result directly implies the original Erd\H{o}s-Heilbronn conjecture as a corollary.

In the decades since, multiple new proofs of the Cauchy-Davenport theorem and the Erd\H{o}s-Heilbronn conjecture have been developed, using tools ranging from harmonic analysis to the uncertainty principle on finite cyclic groups \cite{2005}. In 2004, Das introduced a linear algebraic method to give a new proof of the Cauchy-Davenport theorem, relying only on elementary linear algebra over finite fields \cite{2004}. In this note, we extend Das's linear algebraic approach to give a new, elementary proof of the Alon-Nathanson-Ruzsa theorem for restricted sumsets. Compared with existing proofs, our method requires no advanced algebraic or combinatorial machinery, only standard undergraduate-level linear algebra, including Vandermonde matrices and the solvability of linear systems over finite fields.

The rest of this paper is organized as follows. Section 2 collects the preliminary definitions and auxiliary lemmas that form the foundation of our proof. Section 3 states our main results, including the main theorem and its direct corollary, the Erd\H{o}s-Heilbronn theorem. The complete proof of the main theorem is presented in Section 4. We conclude with some brief remarks and directions for future work in Section 5.

\section{Preliminaries}

\bigskip

In this section, we introduce the core definitions and auxiliary lemmas used throughout the paper. All sets are assumed to be nonempty subsets of $\Z/p\Z$, where $p$ is a fixed prime number. For a finite set $S$, we write $|S|$ for the cardinality (number of elements) of $S$.

Let $A=\{a_1,\dots,a_m\}$ be a subset of $\Z/p\Z$ with $a_1,\dots,a_m$ pairwise distinct. Let $w(a_1),\ldots,w(a_m)$ be a sequence in $\Z/p\Z$ satisfying $w(a_\ell)$ is nonzero in $\Z/p\Z$  for some $1\leqslant \ell\leqslant m$. We shall say a sequence $u_1,\ldots,u_n$ a nonzero sequence if
$u_\ell\not=0$ for some $1\leqslant \ell\leqslant n$. We use $e_{A}(w)$ to denote the smallest natural number $i$ such that $\sum_{j=1}^mw(a_j)a_j^i$ is nonzero in $\Z/p\Z$. We have the following conclusion.

\begin{lemma}Let $A$, $w$ and $e_A(w)$ be as above. Then
$$e_A(w)\leqslant |A|-1.$$
\end{lemma}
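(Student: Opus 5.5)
We argue by contradiction via the invertibility of a Vandermonde matrix. Write $m=|A|$ and suppose $e_A(w)\geqslant m$; equivalently, $\sum_{j=1}^m w(a_j)a_j^i=0$ in $\Z/p\Z$ for every $i=0,1,\dots,m-1$. (We use the convention $0^0=1$, so the case $i=0$ reads $\sum_j w(a_j)=0$. Also $m\leqslant p$, since $A$ consists of distinct elements of $\Z/p\Z$.)

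Collect these $m$ equations into one matrix equation $V\mathbf{w}=\mathbf{0}$. Here $\mathbf{w}=(w(a_1),\dots,w(a_m))^{T}$, and $V$ is the $m\times m$ matrix with entries $V_{ij}=a_j^{\,i-1}$, for $1\leqslant i,j\leqslant m$. Thus $V$ is the (transposed) Vandermonde matrix on the nodes $a_1,\dots,a_m$. Its determinant is
$$\det V=\prod_{1\leqslant s<t\leqslant m}(a_t-a_s).$$
Each factor is nonzero because the $a_j$ are pairwise distinct. Since $\Z/p\Z$ is a field, the product of nonzero elements is nonzero, so $\det V\neq 0$.

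Therefore $V$ is invertible over $\Z/p\Z$, and the only solution of $V\mathbf{w}=\mathbf{0}$ is $\mathbf{w}=\mathbf{0}$. This contradicts the hypothesis that $w$ is a nonzero sequence, i.e.\ that $w(a_\ell)\neq 0$ for some $\ell$.

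Hence some $i\leqslant m-1$ satisfies $\sum_j w(a_j)a_j^i\neq 0$. In particular the smallest such $i$ exists, so $e_A(w)$ is well defined, and $e_A(w)\leqslant m-1=|A|-1$.

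The only point needing care is the exponent $i=0$ when some $a_j=0$: we must take $0^0=1$ so that the first row of $V$ is all ones, which is what the Vandermonde determinant formula requires. Beyond that, the argument rests only on the standard Vandermonde determinant and the fact that $\Z/p\Z$ is an integral domain.
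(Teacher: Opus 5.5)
Your proof is correct and follows the paper's argument essentially verbatim: assume all $m$ power sums $\sum_j w(a_j)a_j^i$ ($0\leqslant i\leqslant m-1$) vanish, write this as $M\mathbf{x}=\mathbf{0}$ with the Vandermonde matrix $(a_j^{i-1})$, and use $\det M=\prod_{i<j}(a_j-a_i)\neq 0$ to force $w=0$, a contradiction. Your added remarks on the convention $0^0=1$ and on the existence of the minimum defining $e_A(w)$ are sensible clarifications that the paper leaves implicit.
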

\begin{proof}It is proved by contradiction. Suppose that $e_A(w)\geqslant |A|$. This means
\begin{align}\label{mequations1}\sum_{j=1}^mw(a_j)a_j^i=0\end{align}
for all $0\leqslant i\leqslant m-1$.

The (coefficients) matrix $M$ is defined to be
\begin{align}\label{M}M=(a_j^{i-1})_{\substack{1\leqslant i\leqslant m
\\ 1\leqslant j\leqslant m}}.\end{align}
Note that
$$\det(M)=\prod_{1\leqslant i< j\leqslant m}(a_j-a_i).$$
Since $a_1,\dots,a_m$ are pairwise distinct in $\Z/p\Z$, $\det(M)$ is nonzero. Consider the system of linear equations
\begin{align}\label{mequations2}M\mathbf{x}=\mathbf{0},\end{align}
where $\mathbf{x}=(x_1,\ldots,x_m)^{T}$ and $\mathbf{0}=(0,\ldots,0)^{T}$.

On one hand, \eqref{mequations2} has only the zero solution due to the fact that $\det(M)$ is nonzero. On the other hand, $(w(a_1),\ldots,w(a_m))^{T}$ is a solution to \eqref{mequations2} in view of \eqref{mequations1}. This is a contradiction since $w(a_\ell)$ is nonzero in $\Z/p\Z$  for some $1\leqslant \ell\leqslant m$. This completes the proof.
\end{proof}

The inequality in Lemma 2.1 is sharp since we have the following.
\begin{lemma}Let $A=\{a_1,\dots,a_m\}$ be a subset of $\Z/p\Z$ with $m\geqslant 2$. Then there exists a nonzero sequence $w(a_1),\ldots,w(a_m)$ such that
$$e_A(w)=|A|-1.$$
\end{lemma}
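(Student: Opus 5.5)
We need a nonzero $w$ with $\sum_j w(a_j)a_j^i=0$ for $0\leqslant i\leqslant m-2$ and $\sum_j w(a_j)a_j^{m-1}\neq 0$. The plan is to get $w$ from an underdetermined linear system and then use Lemma 2.1 for the last nonvanishing condition.

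First, consider the $(m-1)\times m$ matrix $M'=(a_j^{i-1})_{1\leqslant i\leqslant m-1,\,1\leqslant j\leqslant m}$, which is the matrix $M$ of \eqref{M} with its last row deleted. Since $m\geqslant 2$, the homogeneous system $M'\mathbf{x}=\mathbf{0}$ has $m-1$ equations in $m$ unknowns over the field $\Z/p\Z$. By rank--nullity its solution space has dimension at least $1$, so there is a nonzero solution $(w(a_1),\ldots,w(a_m))^T$. This gives $\sum_j w(a_j)a_j^i=0$ for all $0\leqslant i\leqslant m-2$, and hence $e_A(w)\geqslant m-1$.

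Second, $w$ is a nonzero sequence, so Lemma 2.1 applies and gives $e_A(w)\leqslant m-1=|A|-1$. Combining the two inequalities yields $e_A(w)=|A|-1$.

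The argument is routine. The only point needing care is that Lemma 2.1 really forces $\sum_j w(a_j)a_j^{m-1}\neq 0$: this is exactly the invertibility of the full Vandermonde matrix $M$. If an explicit $w$ is preferred, take $w(a_j)=\prod_{k\neq j}(a_j-a_k)^{-1}$. Lagrange interpolation (the divided-difference identity) then gives $\sum_j w(a_j)a_j^i=0$ for $i\leqslant m-2$ and $\sum_j w(a_j)a_j^{m-1}=1$. The rank argument above, however, avoids that computation.
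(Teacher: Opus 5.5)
Your proof is correct and is essentially the paper's argument. The paper solves the square system $M\mathbf{x}=(0,\ldots,0,1)^{T}$ directly, so $\sum_j w(a_j)a_j^{m-1}=1$ by construction; you instead take a nonzero kernel vector of the truncated matrix $M'$ and get $\sum_j w(a_j)a_j^{m-1}\neq 0$ from Lemma 2.1, which rests on the same invertibility of $M$. Since $M'$ has rank $m-1$, its kernel is the line spanned by the paper's solution, so your $w$ is a nonzero multiple of it, and your explicit Lagrange weights $\prod_{k\neq j}(a_j-a_k)^{-1}$ are exactly that solution.
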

\begin{proof}The proof is similar to that of Lemma 2.1. Let $M$ be as in \eqref{M}. Now consider the linear equations
\begin{align}\label{mequations3}M\mathbf{x}=\mathbf{b},\end{align}
where $\mathbf{x}=(x_1,\ldots,x_m)^{T}$ and $\mathbf{b}=(0,\ldots,0,1)^{T}$.   Since $\det(M)$ is nonzero, \eqref{mequations3} has a unique solution
$(w_1,\ldots,w_m)^{T}$. Since $\mathbf{b}$ is a nonzero vector, $(w_1,\ldots,w_m)^{T}$ is also a nonzero vector. In particular, we have
$w_\ell$ is nonzero for some $1\leqslant \ell\leqslant m$. On choosing $w(a_j)=w_j$ for $1\leqslant j\leqslant m$, we have
\begin{align*}\sum_{j=1}^mw(a_j)a_j^i= 0\end{align*}
for all $0\leqslant i\leqslant m-2$, and
\begin{align*}\sum_{j=1}^mw(a_j)a_j^{m-1}=1.\end{align*}
According to the definition of $e_A(w)$, we have $e_A(w)=m-1=|A|-1$. This completes the proof.
\end{proof}

\section{Main Results}

In this section, we state our main results, including the general lower bound for restricted sumsets (the Alon-Nathanson-Ruzsa theorem) and its direct corollary, the Erd\H{o}s-Heilbronn theorem. We continue to work over the finite field $\Z/p\Z$ for a fixed prime $p$, with all sumset notation consistent with Section 1.

\begin{theorem}[Main Theorem]
Let $A$ and $B$ be nonempty subsets of $\Z/p\Z$ with $|A|\not=|B|$. Then the cardinality of the restricted sumset satisfies
$$|A\dot{+}B|\geqslant \min\{p,\ |A|+|B|-2\}.$$
\end{theorem}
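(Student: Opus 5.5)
We argue by contradiction, in the spirit of Das's method, replacing the polynomial $(x-y)\prod_{c}(x+y-c)$ of the Combinatorial Nullstellensatz by a weighted power sum over $A\times B$. Write $m=|A|$, $n=|B|$, $C=A\dot{+}B$, and suppose $|C|\leqslant m+n-3$ and $|C|<p$.

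\textbf{Reduction.} First reduce to the case $m+n-2\leqslant p$. If $m+n-2>p$, delete elements from $A$ and $B$ until $m'+n'-2=p$ with $m'\neq n'$, which is possible because $p$ is odd or by handling small cases directly. Any lower bound $p$ for the smaller restricted sumset then transfers to $A\dot{+}B$. So from now on $m+n-2\leqslant p$.

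\textbf{Choice of weights.} By Lemma 2.2, choose a weight $w$ on $A$ with
$$\sum_{a\in A} w(a)a^i=0 \ (0\leqslant i\leqslant m-2), \qquad \sum_{a\in A} w(a)a^{m-1}=1.$$
The proof of Lemma 2.2 gives the normalisation $1$, and when $m=1$ we simply take $w\equiv 1$. Choose $v$ on $B$ in the same way with exponent $n-1$.

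\textbf{The two evaluations of $S_k$.} For $k\geqslant 0$ put
$$S_k=\sum_{a\in A}\sum_{b\in B} w(a)v(b)(a-b)(a+b)^k.$$
\emph{First evaluation.} Pairs with $a=b$ contribute nothing, so $S_k=\sum_{c\in C}u(c)c^k$, where
$$u(c)=\sum_{a+b=c,\ a\neq b} w(a)v(b)(a-b).$$
If $u$ is a nonzero sequence, Lemma 2.1 applied to $C$ gives some $k\leqslant |C|-1\leqslant m+n-4$ with $S_k\neq 0$. If $u$ is identically zero, then $S_k=0$ for every $k$.

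\emph{Second evaluation.} Expand $(a-b)(a+b)^k$ as a combination of monomials $a^ib^j$ with $i+j=k+1$. The sum then factors into products $\bigl(\sum_a w(a)a^i\bigr)\bigl(\sum_b v(b)b^j\bigr)$. These vanish unless $i\geqslant m-1$ and $j\geqslant n-1$, so $S_k=0$ whenever $k\leqslant m+n-4$. This rules out the case $u\neq 0$.

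For $k=m+n-3$, the only surviving term is $i=m-1$, $j=n-1$. Hence $S_{m+n-3}$ equals the coefficient of $x^{m-1}y^{n-1}$ in $(x-y)(x+y)^{m+n-3}$, namely
$$\binom{m+n-3}{m-2}-\binom{m+n-3}{m-1}=\frac{m-n}{m+n-2}\binom{m+n-2}{m-1}.$$
This is nonzero modulo $p$: since $m+n-2\leqslant p$ and $m\neq n$ with $|m-n|<p$, the binomial coefficients involved have no factor of $p$ (treating $m+n-2=p$ separately via the left-hand expression). This rules out $u\equiv 0$. Both cases give a contradiction, which proves $|C|\geqslant\min\{p,\ m+n-2\}$.

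\textbf{Main obstacle.} I expect the delicate step to be this nonvanishing of the binomial coefficient modulo $p$, together with the reduction to $m+n-2\leqslant p$ and the degenerate cases $m=1$ or $n=1$. The first guess is to take $m+n-2=p$ exactly in the reduction, where the factor $\frac{1}{m+n-2}$ needs care. There I would work directly with $\binom{p-1}{m-2}-\binom{p-1}{m-1}\equiv(-1)^{m}\cdot 2\not\equiv 0$ for odd $p$, and treat $p=2$ by hand.
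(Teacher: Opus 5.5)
Your proposal is correct and is essentially the paper's proof: the same reduction to $|A|+|B|-2\leqslant p$, the same weights from Lemma 2.2, the same power sums $S_k=\gamma_k$ expanded binomially as in Lemmas 4.1--4.2, the same surviving coefficient $\binom{m+n-3}{m-2}-\binom{m+n-3}{m-1}$, and Lemma 2.1 applied to $C$. The differences are minor and in your favour: you absorb $\min(m,n)=1$ by taking $w\equiv 1$ instead of using Lemma 4.3, and in the reduction the oddness of $m'+n'=p+2$ forces $m'\neq n'$. You also justify why the binomial difference is nonzero modulo $p$, including the boundary case $m+n-2=p$ via $\binom{p-1}{j}\equiv(-1)^j$, whereas the paper only asserts it.
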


This main theorem directly implies the classical Erd\H{o}s-Heilbronn conjecture, which we state formally as a corollary below.

\begin{corollary}[Erd\H{o}s-Heilbronn Theorem]
Let $A$ be a nonempty subset of $\Z/p\Z$. Then
$$|A\dot{+}A|\geqslant \min\{p,\ 2|A|-3\}.$$
\end{corollary}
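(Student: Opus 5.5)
The statement immediately above is the Corollary, and I will deduce it from the Main Theorem by removing one element from $A$.

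Let $m=|A|$. If $m=1$, then $A\dot{+}A=\emptyset$, while $\min\{p,2m-3\}=\min\{p,-1\}=-1$. The inequality therefore holds trivially. So I will assume $m\geqslant 2$.

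Pick any $a^*\in A$ and set $B=A\setminus\{a^*\}$. Then $B$ is nonempty, since $|B|=m-1\geqslant 1$, and $|B|\neq|A|$. Every element of $A\dot{+}B$ has the form $a+b$ with $a\in A$, $b\in B\subseteq A$ and $a\neq b$, so it lies in $A\dot{+}A$. Hence $A\dot{+}B\subseteq A\dot{+}A$.

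Applying the Main Theorem to the pair $A,B$ gives
$$|A\dot{+}A|\geqslant|A\dot{+}B|\geqslant\min\{p,\ |A|+|B|-2\}=\min\{p,\ 2m-3\},$$
which is exactly the claimed bound.

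There is no real obstacle here. The only points needing care are the degenerate case $m=1$, where the bound is vacuous, and checking that $B$ is nonempty so that the Main Theorem applies. All the substantive work lies in the Main Theorem, which this argument takes as given.
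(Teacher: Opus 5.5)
Your proof is correct and follows the paper's argument: remove one element $a^*$ to get $B=A\setminus\{a^*\}$ with $|B|\neq|A|$, then apply the Main Theorem. Two small differences work in your favour: you use only the inclusion $A\dot{+}B\subseteq A\dot{+}A$, which suffices in place of the equality the paper asserts, and you treat $|A|=1$ separately, a case the paper omits even though there $A\setminus\{a_0\}$ is empty and the Main Theorem does not apply.
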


\begin{proof}
Fix any element $a_0\in A$, and define $A'=A\setminus\{a_0\}$. Then $|A|\not=|A'|$, and it is straightforward to verify that $A\dot{+}A=A\dot{+}A'$: any sum $a+b$ with $a,b\in A$ and $a\not=b$ can be written as a sum of an element from $A$ and an element from $A'$ with distinct summands, and vice versa. Applying the Main Theorem to the pair $(A,A')$, we get
\begin{align*}
|A\dot{+}A|=|A\dot{+}A'|
&\geqslant \min\{p,\ |A|+|A'|-2\} \\
&=\min\{p,\ |A|+(|A|-1)-2\} \\
&=\min\{p,\ 2|A|-3\},
\end{align*}
which completes the proof.
\end{proof}

Our proof of the Main Theorem, presented in the next section, uses only the elementary linear algebraic tools established in Section 2. This gives a more accessible proof of the Erd\H{o}s-Heilbronn theorem, avoiding the advanced combinatorial and algebraic machinery used in prior arguments.

\section{Proof of the Main Theorem}

In this section, we give the complete proof of the Main Theorem using the linear algebraic tools from Section 2. We split the proof into several auxiliary lemmas, followed by the final argument for the main result.

Let $A=\{a_1,\dots,a_m\}$  and $B=\{b_1,\dots,b_k\}$ be subsets of $\Z/p\Z$. Let $w_1(a_1),\ldots,w_1(a_m)$ and $w_2(b_1),\ldots,w_2(b_k)$ be two sequences in $\Z/p\Z$. For nonnegative integer $i$, we introduce
\begin{align}\label{definealpha}\alpha_i:=\alpha_i(A,w_1)=\sum_{j=1}^mw_1(a_j)a_j^{i}\end{align}
and
\begin{align}\label{definebeta}\beta_i:=\beta_i(B,w_2)=\sum_{j=1}^kw_2(b_j)b_j^{i}.\end{align}

Let $C=A\dot{+}B$, and suppose that $C=\{c_1,\ldots,c_{t}\}$. For $c_j\in C$, we define
\begin{align*}w(c_j)=\sum_{\substack{a\in A,b\in B\\ a+b=c_j}}w_1(a)w_2(b)(a-b).\end{align*}
Then we introduce
\begin{align*}\gamma_i:=\gamma_i(C,w)=\sum_{j=1}^tw(c_j)c_j^{i}.\end{align*}

\begin{lemma}Let $A,B,C$ and $\alpha_i,\beta_i,\gamma_i$ be as above. Then
$$\gamma_n=\sum_{i=0}^{n}C_{n}^i\alpha_{i+1}\beta_{n-i}-\sum_{i=0}^{n}C_{n}^i\alpha_{i}\beta_{n+1-i}.$$
\end{lemma}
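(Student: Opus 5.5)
The plan is to expand $\gamma_n$ directly from the definitions and then account for the pairs with $a=b$, which are excluded from $C$. Substituting the definition of $w(c_j)$ gives
$$\gamma_n=\sum_{j=1}^t\sum_{\substack{a\in A,b\in B\\ a+b=c_j}}w_1(a)w_2(b)(a-b)(a+b)^n.$$
The inner sum runs over pairs $(a,b)$ with $a+b=c_j$. Since $c_j\in C=A\dot{+}B$, I read this sum as ranging over pairs with $a\neq b$ only, so that the definition is consistent with the restricted sumset. Regrouping by $c_j$ then yields
$$\gamma_n=\sum_{\substack{a\in A,\,b\in B\\ a\neq b}}w_1(a)w_2(b)(a-b)(a+b)^n.$$

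The key observation is that the factor $(a-b)$ vanishes when $a=b$. Hence the restriction $a\neq b$ can be dropped at no cost, giving
$$\gamma_n=\sum_{a\in A}\sum_{b\in B}w_1(a)w_2(b)(a-b)(a+b)^n.$$
If the inner sum in $w(c_j)$ is instead meant to include pairs with $a=b$, the same remark shows those terms contribute zero anyway. Either way, we arrive at a full double sum over $A\times B$.

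Next I expand $(a+b)^n=\sum_{i=0}^n C_n^i a^i b^{n-i}$ by the binomial theorem, which holds in $\Z/p\Z$. Distributing the factor $(a-b)$ gives
$$(a-b)(a+b)^n=\sum_{i=0}^n C_n^i a^{i+1}b^{n-i}-\sum_{i=0}^n C_n^i a^{i}b^{n+1-i}.$$
Substituting this into the double sum and interchanging the finite sums, the sum over $a$ of $w_1(a)a^{i+1}$ is $\alpha_{i+1}$ and the sum over $b$ of $w_2(b)b^{n-i}$ is $\beta_{n-i}$. The second term factors in the same way into $\alpha_i\beta_{n+1-i}$. This gives exactly the claimed identity.

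There is no real obstacle. The only delicate point is the first step, namely justifying that summing over $c\in C$ of the grouped sums reproduces the double sum over all $(a,b)\in A\times B$. This requires two facts. First, every pair with $a\neq b$ has $a+b\in C$ and is counted exactly once, under its own sum $c=a+b$. Second, the excluded diagonal pairs $a=b$ carry the factor $a-b=0$, so omitting them changes nothing.
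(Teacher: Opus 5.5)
Your proposal is correct and follows the same route as the paper: rewrite $\gamma_n$ as the unrestricted double sum over $A\times B$, expand $(a-b)(a+b)^n$ by the binomial theorem, and factor the result into products $\alpha_{i+1}\beta_{n-i}$ and $\alpha_i\beta_{n+1-i}$. Your remark that a pair with $a+b\notin C$ must have $a=b$, and is therefore killed by the factor $a-b$, spells out a justification the paper uses only implicitly when it passes from the sum over $c_j\in C$ to the full double sum.
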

\begin{proof}Note that
\begin{align*}
\gamma_n
&=\sum_{j=1}^tw(c_j)c_j^{n}
=\sum_{j=1}^tc_j^{n}\sum_{\substack{a\in A,b\in B\\ a+b=c_j}}w_1(a)w_2(b)(a-b) \\
&=\sum_{\substack{a\in A,b\in B}}w_1(a)w_2(b)(a-b)(a+b)^n.
\end{align*}
Since
$$(a+b)^n=\sum_{i=0}^{n}C_{n}^ia^ib^{n-i},$$
we have
\begin{align*}
\gamma_n
&=\sum_{\substack{a\in A,b\in B}}w_1(a)w_2(b)(a-b)\sum_{i=0}^{n}C_{n}^ia^ib^{n-i} \\
&=\sum_{\substack{a\in A,b\in B}}w_1(a)w_2(b)\sum_{i=0}^{n}C_{n}^ia^{i+1}b^{n-i} \\
&\quad -\sum_{\substack{a\in A,b\in B}}w_1(a)w_2(b)\sum_{i=0}^{n}C_{n}^ia^{i}b^{n+1-i}.
\end{align*}
We observe that
\begin{align*}
\sum_{\substack{a\in A,b\in B}}
&w_1(a)w_2(b)\sum_{i=0}^{n}C_{n}^ia^{i+1}b^{n-i} \\
&=\sum_{i=0}^{n}C_{n}^i \Big( \sum_{a\in A}w_1(a)a^{i+1}\Big)
\Big(\sum_{b\in B}w_2(b)b^{n-i}\Big) \\
&=\sum_{i=0}^{n}C_{n}^i \alpha_{i+1}\beta_{n-i}.
\end{align*}
Similarly, we also have
\begin{align*}
\sum_{\substack{a\in A,b\in B}}
&w_1(a)w_2(b)\sum_{i=0}^{n}C_{n}^ia^{i}b^{n+1-i} \\
&=\sum_{i=0}^{n}C_{n}^i \Big( \sum_{a\in A}w_1(a)a^{i}\Big)
\Big(\sum_{b\in B}w_2(b)b^{n+1-i}\Big) \\
&=\sum_{i=0}^{n}C_{n}^i \alpha_{i}\beta_{n+1-i}.
\end{align*}
Now we can conclude from above that
$$\gamma_n=\sum_{i=0}^{n}C_{n}^i\alpha_{i+1}\beta_{n-i}-\sum_{i=0}^{n}C_{n}^i\alpha_{i}\beta_{n+1-i}.$$
The proof of this lemma is finished.
\end{proof}

\begin{lemma}Let $A,B,C$ and $\alpha_i,\beta_i,\gamma_i$ be as above. Let $r,s$ be nonnegative integers. Assume that $\alpha_{i}=0$ for $0\leqslant i\leqslant r$ and
$\beta_{i}=0$ for $0\leqslant i\leqslant s$. Then
$$\gamma_{r+s+1}=\Big(C_{r+s+1}^{r}-C_{r+s+1}^{s}\Big)\alpha_{r+1}\beta_{s+1}$$
and
$$\gamma_{n}=0 \ \textrm{ for all }\ 0\leqslant n\leqslant r+s.$$
\end{lemma}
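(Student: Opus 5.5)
Apply Lemma 4.1 and, term by term, determine which products $\alpha_{i}\beta_{j}$ can survive under the vanishing hypotheses $\alpha_i=0$ for $i\leqslant r$ and $\beta_j=0$ for $j\leqslant s$. A product $\alpha_i\beta_j$ is possibly nonzero only when $i\geqslant r+1$ and $j\geqslant s+1$, which forces $i+j\geqslant r+s+2$. In both sums of Lemma 4.1 the index total is fixed: in $\alpha_{i+1}\beta_{n-i}$ the indices sum to $n+1$, and in $\alpha_i\beta_{n+1-i}$ they also sum to $n+1$. So the whole computation reduces to this degree count.

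For $0\leqslant n\leqslant r+s$, every term in both sums has index total $n+1\leqslant r+s+1<r+s+2$. Hence each product contains a vanishing factor, every term is zero, and $\gamma_n=0$.

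For $n=r+s+1$, the index total is $r+s+2$, so a term survives only when its $\alpha$-index is exactly $r+1$ and its $\beta$-index is exactly $s+1$.
\begin{itemize}
\item In the first sum, $i+1=r+1$ gives $i=r$, with $\beta$-index $n-i=s+1$. This $i$ lies in $[0,n]$, and the coefficient is $C_{r+s+1}^{r}$.
\item In the second sum, $i=r+1$ gives $\beta$-index $n+1-r-1=s+1$. Again $0\leqslant r+1\leqslant n$ (since $s\geqslant 0$), and the coefficient is $C_{r+s+1}^{r+1}$.
\end{itemize}
By the symmetry $C_{N}^{k}=C_{N}^{N-k}$ with $N=r+s+1$, we have $C_{r+s+1}^{r+1}=C_{r+s+1}^{s}$. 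Subtracting the second surviving term from the first gives
$$\gamma_{r+s+1}=\Big(C_{r+s+1}^{r}-C_{r+s+1}^{s}\Big)\alpha_{r+1}\beta_{s+1}.$$

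There is no real obstacle in this argument. The only points needing care are checking that the surviving indices $i=r$ and $i=r+1$ actually lie in the summation range $0\leqslant i\leqslant n$, and applying the binomial symmetry correctly. Both are verified above.
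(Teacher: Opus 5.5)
Your proof is correct and follows essentially the same route as the paper: both apply Lemma 4.1 and use the fact that every product has index total $n+1$, so for $n\leqslant r+s$ every term contains a vanishing factor, while for $n=r+s+1$ only $i=r$ in the first sum and $i=r+1$ in the second survive, with $C_{r+s+1}^{r+1}=C_{r+s+1}^{s}$. Your check that these two indices lie in the summation range $0\leqslant i\leqslant n$ is a small detail the paper leaves implicit.
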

\begin{proof}By Lemma 4.1,
$$\gamma_{r+s+1}=\sum_{i=0}^{r+s+1}C_{r+s+1}^i\alpha_{i+1}\beta_{r+s+1-i}-\sum_{i=0}^{r+s+1}C_{r+s+1}^i\alpha_{i}\beta_{r+s+2-i}.$$
If $i\leqslant r-1$, then $i+1\leqslant r$ and thus $\alpha_{i+1}=0$. If $i\geqslant r+1$, then $r+s+1-i \leqslant s$ and thus  $\beta_{r+s+1-i}=0$. Therefore,  $\alpha_{i+1}\beta_{r+s+1-i}=0$ for all $i\not=r$. Then we have
$$\sum_{i=0}^{r+s+1}C_{r+s+1}^i\alpha_{i+1}\beta_{r+s+1-i}=C_{r+s+1}^r\alpha_{r+1}\beta_{s+1}.$$
Similarly, $\alpha_{i}\beta_{r+s+2-i}=0$ for all $i\not=r+1$. Then we have
$$\sum_{i=0}^{r+s+1}C_{r+s+1}^i\alpha_{i}\beta_{r+s+2-i}=C_{r+s+1}^{s}\alpha_{r+1}\beta_{s+1}.$$
We conclude from above
$$\gamma_{r+s+1}=\Big(C_{r+s+1}^{r}-C_{r+s+1}^{s}\Big)\alpha_{r+1}\beta_{s+1}.$$

We have
$$\gamma_n=\sum_{i=0}^{n}C_{n}^i\alpha_{i+1}\beta_{n-i}-\sum_{i=0}^{n}C_{n}^i\alpha_{i}\beta_{n+1-i}.$$
For $n\leqslant r+s$, we have $(i+1)+(n-i)=n+1\leqslant r+s+1$. Then either $i+1\leqslant r$ or $n-i\leqslant s$. Thus,
$\alpha_{i+1}\beta_{n-i}=0$ for all $0\leqslant i\leqslant n$. Similarly, $\alpha_{i}\beta_{n+1-i}=0$ for all $0\leqslant i\leqslant n$. Then we conclude that $\gamma_n=0$ for $n\leqslant r+s$.

The proof of this lemma is finished.
\end{proof}

\begin{lemma}Suppose that $|A|=1$ or $|B|=1$. Then
$$|A\dot{+}B|\geqslant \min\{p,\ |A|+|B|-2\}.$$
\end{lemma}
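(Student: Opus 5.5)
This is a direct counting argument; no linear algebra is needed. By the symmetry $a+b=b+a$ we may assume $|B|=1$, say $B=\{b\}$, with $|A|=m$ arbitrary. Strictly speaking $A\dot{+}B$ and $B\dot{+}A$ are different sets, but they coincide, since $\{a+b: a\in A, b\in B, a\neq b\}$ is symmetric under swapping the roles of $A$ and $B$. So the case $|A|=1$ reduces to the case $|B|=1$.

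In this case
$$A\dot{+}B=\{a+b:\ a\in A\setminus\{b\}\}.$$
Translation by $b$ is injective on $\Z/p\Z$, so $|A\dot{+}B|=|A\setminus\{b\}|\geqslant m-1=|A|+|B|-2$. This is at least $\min\{p,|A|+|B|-2\}$, as claimed.

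The only point to check is the degenerate situation $A=\{b\}$, where the restricted sumset is empty. Then $|A|+|B|-2=0$, so the inequality still holds, with both sides equal to zero. (In the Main Theorem, where $|A|\neq|B|$, this case cannot even occur.)

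I expect no real obstacle here. The lemma just dispatches the base case so that the later argument, which uses Lemmas 2.2 and 4.2 with $r=|A|-2$ and $s=|B|-2$, can assume $|A|,|B|\geqslant 2$; those lemmas need $m\geqslant 2$ and nonnegative $r,s$.
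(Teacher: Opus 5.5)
Your proof is correct and is essentially the paper's argument. The paper likewise reduces by symmetry to the case where one set is a singleton, say $A=\{a_0\}$, and notes that at least $|B|-1$ elements $b\in B$ satisfy $b\neq a_0$, each giving a distinct sum $a_0+b\in A\dot{+}B$; it treats $|A|=|B|=1$ as trivial, which corresponds to your degenerate case.
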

\begin{proof}Without loss of generality, we can assume that $|A|=1$. If $|B|=1$, then the desired inequality holds trivially (although it is possible that $A\dot{+}B=\emptyset$). Now we consider the case $|B|\geqslant 2$. We write $A=\{a_0\}$ and $|B|=k$. We can find $k-1$ distinct elements $b_1,\ldots,b_{k-1}$ in $B$ such that $b_i\not=a_0$ for $1\leqslant i\leqslant k-1$.

Note that $a_0+b_1,\ldots,a_0+b_{k-1}\in A\dot{+}B$. Thus, $|A\dot{+}B|\geqslant k-1=|A|+|B|-2=\min\{p,\ |A|+|B|-2\}$.
The proof of this lemma is finished.

\end{proof}
\bigskip

Now we are able to prove the Main Theorem. If $|A|+|B|-2\geqslant p+1$, we claim that there exist nonempty subsets $A'\subset A$ and $B'\subset B$ such that $|A'|+|B'|-2=p$ and $|A'|\not=|B'|$.

Suppose that $|A|+|B|-2= p+d$ with $d\geqslant 1$. We write $d_1=\lfloor d/2\rfloor$ and
$d_2=\lceil d/2\rceil$. Clearly, $d_1+d_2=d$. Without loss of generality, we assume that $|A|<|B|$. We choose $d_2$ elements $a_1',\ldots,a_{d_2}'$ in $A$ and  $d_1$ elements $b_1',\ldots,b_{d_1}'$ in $B$. Let $A'=A\setminus \{a_1',\ldots,a_{d_2}'\}$ and $B'=B\setminus \{b_1',\ldots,b_{d_1}'\}$.
From $|A|+|B|-2= p+d$, we can see that $|A|\geqslant d+2\geqslant d_2+2$. Thus, $A'$ is nonempty and $|A'|=|A|-d_2$. Similarly,
$B'$ is nonempty and $|B'|=|B|-d_1$. We have $|A'|<|B'|$ since $|A|<|B|$ and $d_2\geqslant d_1$. It is easy to check that
$|A'|+|B'|-2=|A|-d_2+|B|-d_1-2=|A|+|B|-2-d=p$. Therefore, the above claim is true.

Now it suffices to prove the Main Theorem in the case $|A|+|B|-2\leqslant p$, since if $|A|+|B|-2\geqslant p+1$ then
$|A\dot{+}B|\geqslant |A'\dot{+}B'|\geqslant \min\{p,\ |A'|+|B'|-2\}=p=\min\{p,\ |A|+|B|-2\}$.

From now on, we assume that $|A|+|B|-2\leqslant p$. Let $A=\{a_1,\dots,a_m\}$  and $B=\{b_1,\dots,b_k\}$. We have $m+k\leqslant p+2$. In view of Lemma 4.3, we can assume that $m\geqslant 2$ and $k\geqslant 2$.
Let $C=A\dot{+}B=\{c_1,\ldots,c_{t}\}$. By Lemma 2.2, there exists a nonzero sequence $w_1(a_1),\ldots,w_1(a_m)$ such that
\begin{align}\label{eA}e_A(w_1)=|A|-1=m-1.\end{align}
Also, there exists a nonzero sequence $w_2(b_1),\ldots,w_2(b_k)$ such that
\begin{align}\label{eB}e_B(w_2)=|B|-1=k-1.\end{align}
Recalling the definitions of $\alpha_i:=\alpha_i(A,w_1)$ and $\beta_i:=\beta_i(B,w_2)$, we conclude from  \eqref{eA} and \eqref{eB} that
$\alpha_i=0$ for $0\leqslant i \leqslant m-2$, $\alpha_{m-1}\not=0$, $\beta_i=0$ for $0\leqslant i \leqslant k-2$ and $\beta_{k-1}\not=0$.

Applying Lemma 4.2 with $r=m-2$ and $s=k-2$, we obtain
$$\gamma_{m+k-3}=\Big(C_{m+k-3}^{m-2}-C_{m+k-3}^{k-2}\Big)\alpha_{m-1}\beta_{k-1}.$$
Since $m+k-3\leqslant p-1$ and $m\not=k$, we have $C_{m+k-3}^{m-2}-C_{m+k-3}^{k-2}\not\equiv 0\pmod{p}$. Note that $\alpha_{m-1}\beta_{k-1}$ is nonzero, we have $\gamma_{m+k-3}\not=0$. By Lemma 4.2, we also have $\gamma_{n}=0$ for $0\leqslant n\leqslant m+k-4$.

Therefore, $m+k-3$ is the smallest natural number $i$ such that $\sum_{j=1}^tw(c_j)c_j^i$ is nonzero. According to the definition of $e_{C}(w)$, we have $e_{C}(w)=m+k-3$.

We apply Lemma 2.1 to the set $C$ to conclude that $e_{C}(w)\leqslant |C|-1$. Now we obtain $|C|\geqslant e_{C}(w)+1=m+k-2$.
This proves $|A\dot{+}B|\geqslant  |A|+|B|-2=\min\{p,\ |A|+|B|-2\}$. The proof of the Main Theorem is finished.

\section{Concluding Remarks}

In this note, we developed the linear algebraic method initiated by Das to give a new, elementary proof of the Alon-Nathanson-Ruzsa theorem for restricted sumsets, which directly implies the classical Erd\H{o}s--Heilbronn conjecture. Unlike prior proofs that rely on the Combinatorial Nullstellensatz, exterior algebra, or harmonic analysis, our argument uses only standard undergraduate linear algebra over finite fields, making the result more accessible to a broader audience.

There are several natural directions for future work. First, it would be interesting to extend this linear algebraic approach to more general restricted sumset results, for example the Dias da Silva--Hamidoune theorem for sums of $n$ distinct elements. Second, our method is currently restricted to sumsets over finite fields of prime order. It remains open whether this framework can be adapted to give lower bounds for sumsets over composite moduli or more general abelian groups. Finally, this approach may have applications to other problems in additive combinatorics, such as the Erd\H{o}s--Szemer\'{e}di theorem on sum-product sets, which we leave for future investigation.

\bigskip
\bibliographystyle{amsplain}
\bibliography{references}

\end{document}